\documentclass[openany,10pt,letter]{article}
\usepackage{todonotes}
\usepackage[utf8]{inputenc}
\usepackage[sumlimits]{amsmath}
\usepackage{amsfonts,amssymb,amsthm}
\usepackage[english]{babel}

\numberwithin{equation}{section}
\newtheorem{theorem}{Theorem}[section]
\newtheorem{lemma}[theorem]{Lemma}
\newtheorem{proposition}[theorem]{Proposition}

\theoremstyle{definition}
\newtheorem{definition}[theorem]{Definition}
\newtheorem{example}[theorem]{Example}

\newtheorem{obs}[theorem]{Observation}
\newtheorem{remark}[theorem]{Remark}

\newcommand{\newword}[1]{\textbf{#1}}

\newcommand{\Sol}{\operatorname{Sol}}
\newcommand{\trop}{\operatorname{trop}}
\newcommand{\In}{\operatorname{In}}
\newcommand{\Val}{\operatorname{Val}}
\newcommand{\supp}{\operatorname{supp}}
\newcommand{\suppmin}{\mathop{\operatorname{supp}_{\min}}}



\title{Initial forms and a notion of basis for tropical differential equations}
\author{Alex Fink and Zeinab Toghani}

\begin{document}
\maketitle

\begin{abstract}
We show that solution sets of systems of tropical differential equations can be characterised in terms of monomial-freeness of an initial ideal.
We discuss a candidate definition of tropical differential basis and give a nonexistence result for such bases in an example.
\end{abstract}

\noindent\textbf{Key words}: differential algebra, tropical geometry, tropical differential equations, initial form, tropical basis

\section{Introduction}
In \cite{Grigoriev:2015}, Grigoriev introduced a tropical approach to differential equations.
He was interested in constraining the supports of power series solutions
to a system of differential equations in characteristic zero
in an effective algorithmic way,
and found that tropical techniques give rise to some limitations on these supports.

Aroca, Garay and Toghani \cite{ArocaGarayToghani:2016} extended this work,
bringing to the subject the point of view that tropical solution sets of differential systems
can be thought of as analogues to tropical varieties.
Their main result was a Fundamental Theorem of Tropical Differential Geometry (the equivalence (\ref{set1})$\Leftrightarrow$(\ref{set2}) of Theorem~\ref{pp} below):
a system of differential equations over a field~$K$ can be tropicalised, and under mild assumptions, the solutions of this tropicalisation
agree with the tropicalisations of the $K$-valued solutions in Grigoriev's sense.

In the non-differential setting of ideals in a polynomial ring, the heart of the Fundamental Theorem of Tropical Geometry
is the statement that taking the solution set commutes with tropicalisation.
But further equivalences can be added to the theorem:
notably, the statement in \cite[Theorem 3.2.3]{MaclaganSturmfels:2015} includes a description in terms of initial ideals.
In this paper, we add an initial ideal characterisation to the Fundamental Theorem of Tropical Differential Geometry (Theorem~\ref{pp}).

The datum needed to define an initial ideal of a differential polynomial $f$ in Grigoriev's formalism
is a set or tuple of sets $S$ of natural numbers. 
A potential solution to $f$ is a datum $S$ of the same kind, which represents the supports of power series.
In both respects $S$ takes the place of the weight vector in~$\mathbb R^n$ used in tropical (algebraic) geometry.
In a 2019 preprint \cite{HuGao2019}, Hu and Gao define the $S$-initial part of a differential ideal with coefficients in the power series ring $K[[t]]$, denoting it $\operatorname{in}_S(P)$.
We arrived at the definition independently, and as such we formulate it in a mildly different way:
in brief, our initial ideal $\In_S(P)$ has coefficients in the residue field $K$ of~$K[[t]]$, 
whereas Hu and Gao retain powers of~$t$ in the coefficients of their $\operatorname{in}_S(P)$.
To distinguish the two we denote our formulation as~$\In_S(P)$, with a capital letter.
See Remark~\ref{rem:HuGao} for a detailed comparison.

Tropical bases are a central concept in tropical geometry, especially for computational approaches.
An ideal in a polynomial ring has a finite tropical basis.
The subject of Section~\ref{sec:bases} of this paper 
is a notion of tropical differential basis, Definition~\ref{def:basis},
which, although it might look initially compelling, does not allow a comparably good finiteness result.
To be precise, we exhibit linear differential ideals with no finite basis of linear forms in this sense.
In the non-differential theory, tropical linear spaces have finite tropical bases of linear forms,
arising from circuits of valuated matroids \cite[Sec.~4.4]{MaclaganSturmfels:2015}.
Given how much simpler the theory of linear differential equations is than the general theory,
a definition of basis which has to leave the linear world must be considered lacking.
Thus, we leave it as an open question to find a better definition of basis for tropical differential ideals.

The main thrust of the work \cite{HuGao2019} is to introduce a notion of Gr\"obner basis for tropical differential ideals.
Tropical differential Gr\"obner bases are usually infinite.
But, just as there is no implicational relationship between tropical bases and (universal) Gr\"obner bases for ideals in a polynomial ring,
it is unclear whether tropical differential Gr\"obner bases are of utility for answering our open question.

\paragraph{Acknowledgments}
This work has received funding from the European Union's Horizon 2020 research and innovation programme under the Marie Sk\l odowska-Curie grant agreement No~792432.
The authors would like to thank Cristhian Garay L\'opez, Jeff Giansiracusa, Dima Grigoriev, Yue Ren, and Felipe Rinc\'on for helpful conversations during its preparation.

\section{Background}
In this section we review definitions.
Our notation is compatible with that of~\cite{ArocaGarayToghani:2016},
although we deviate by introducing the multi-index notation.

\subsection{Differential algebra}
 Let $R$ be a commutative ring with unity. A \newword{derivation} on $R$ is a map $d:R\to R$ that satisfies   $d(a+b)=d(a)+d(b)$ and $d(ab)=d(a)b+ad(b)$ for all $ a,b\in R$. The pair $(R,d)$ is called a \newword{differential ring}. An ideal $I\subset R$ is said to be a \newword{differential ideal} when $d(I)\subset I$.

Let $(R,d)$ be a  differential ring and let $R\{x_1,\ldots,x_n\}$ be the set of polynomials with coefficients in $R$ in the (\newword{differential}) variables $\{x_{ij}\::\:i=1,\ldots,n,\:j\geq0\}$. 
An element of~$R$ is called a \newword{differential polynomial}.
The derivation $d$ on $R$ can be extended to a derivation $d$ of $R\{x_1,\ldots,x_n\}$ by setting $d(x_{ij})=x_{i(j+1)}$ for  $i=1,\dots,n$ and $j\geq0$. The pair $(R\{x_1,\ldots,x_n\},d)$ is a differential ring called \newword{the ring of differential polynomials in $n$ variables with coefficients in $R$}.

A differential polynomial $P\in R\{x_1,\ldots,x_n\}$ can be evaluated at an $n$-tuple $\varphi=(\varphi_1,\ldots,\varphi_n)\in R^n$ 
by evaluating each differential variable $x_{ij}$ at $d^j\varphi_i$.
 A \newword{zero} or a \newword{solution} of~$P$ is an $n$-tuple $\varphi\in R^n$ such that $P(\varphi)=0$. An $n$-tuple $\varphi\in R^n$ is a \newword{solution} of $A\subset R\{x_1,\ldots,x_n\}$ when it is a solution of every differential polynomial in $\Sigma$. That is,
\[
	\Sol (A):=\{\varphi\in R^n : P(\varphi)=0,\forall P\in A\}.
\]

A \newword{differential monomial} in $R\{x_1,\ldots,x_n\}$ of \newword{order} less than or equal to $r$ is an expression of the form
\begin{equation*}
	\varphi_M\prod_{\substack{1\leq i\leq n\\ 0\leq j\leq r}}x_{ij}^{M_{ij}},
\end{equation*}
where $M=(M_{ij})_{\substack{1\leq i\leq n\\ 0\leq j\leq r}}$ is a matrix in $\mathcal{M}_{n\times(r+1)}(\mathbb{Z}_{\geq0})$, $r\in \mathbb{Z}_{\geq0}$, and $\varphi_M\in R$.
We will use multi-index notation and abbreviate the product $\prod_{1\leq i\leq n,0\leq j\leq r}x_{ij}^{M_{ij}}$ as $x^M$,
so the differential monomial above can be written $\varphi_Mx^M$.
A differential polynomial in $R\{x_1,\ldots,x_n\}$ has order less than or equal to $r$ if it is of the form
\begin{equation}\label{pdif2}
	P=\sum_{M\in\Lambda}\varphi_M x^M,
\end{equation}
with $\Lambda\subset \mathcal{M}_{n\times(r	+1)}(\mathbb{Z}_{\geq0})$ finite.

\subsection{The power series ring} 
In what follows, we will work with the power series ring $K[[t]]$ where $K$ is a field of characteristic zero.  
We fix the structure of a differential valued ring on $K[[t]]$ as follows.
We can write the elements of $K[[t]]$ in the form
\begin{equation*}
	\varphi = \sum_{i\in\mathbb{Z}_{\geq0}} a_i t^i
\end{equation*}
with $a_i\in K$ for $i\in\mathbb{Z}_{\geq0}$.
Given such an element $\varphi$,
the \newword{support}  of $\varphi$ is the set 
\[
\supp(\varphi) :=\{i\in\mathbb{Z}_{\geq0}\::\:a_i\neq0\},
\]
Then the valuation on $K[[t]]$ is given by
 \[
\nu (\varphi ) = \min \supp(\varphi)
\]
and the derivation by
\begin{displaymath}
\begin{array}{cccc}
 d:& K[[t]]& \rightarrow & K[[t]]\\
 & \varphi & \mapsto  &\sum_{i\in\mathbb{Z}_{\geq 0}} ia_{i} t^{i-1}.
\end{array}
\end{displaymath} 
Iterating, we see that the $j$-th derivative of $\varphi$ is
\[
d^j\varphi = \sum_{i\in\mathbb{Z}_{\geq 0}}\frac{1}{i!} a_{i+j} t^{i}.
\]

The mapping that sends each series in $K[[t]]$ to its support set (a subset of $\mathbb{Z}_{\geq 0}$) will be called the \newword{tropicalisation} map
\[
	\begin{array}{cccc}
		\text{trop}:
			& K[[t]]
				&\to
					& \mathcal{P}(\mathbb{Z}_{\geq 0})\\
			& \varphi
				&\mapsto
					&\supp(\varphi)
	\end{array}
\]
where $\mathcal{P}(\mathbb{Z}_{\geq0})$ denotes the power set of $\mathbb{Z}_{\geq0}$.
For fixed $n$, the mapping from $K[[t]]^n$ to the $n$-fold product of $\mathcal{P}(\mathbb{Z}_{\geq 0})$ will also be denoted by $\text{trop}$:
\[
	\begin{array}{cccc}
		\trop:
			& K[[t]]^n
				&\to
					& {\mathcal{P}(\mathbb{Z}_{\geq 0})}^n\\
			& \varphi=(\varphi_1,\ldots ,\varphi_n)
				&\mapsto
					& \trop(\varphi)=(\supp(\varphi_1),\ldots ,\supp(\varphi_n)).
	\end{array}
\]

Given a subset $T$ of $K[[t]]^n$, the \newword{tropicalisation} of~$T$ is its image under the map $\trop$:
\[
\text{trop} (T):=\left\{ \trop(\varphi) \::\: \varphi\in T\right\}\subset {\mathcal{P}(\mathbb{Z}_{\geq 0})}^n.
\]

\begin{example}
We consider $\varphi=(\varphi_{1},\varphi_{2},\varphi_{3})=(a+bt, t^3, t+t^2)\subset {K[[t]]}^3$. We have
\[
\begin{array}{ll}
	\text{trop}(\varphi) = 
		&\{(\{0\}, \{ 3\},\{1,2\}), (\{1\}, \{ 3\},\{1,2\}), (\{0,1\}, \{ 3\},\{1,2\}) \}.
\end{array}
\]
\end{example}

The reader will note that the map $\trop$ remembers more information about a power series
than the non-archimedean valuation $\nu$ typically used in tropical mathematics.
The reason for this is to allow the derivation to have a well-defined action on the target of $\trop$.
To wit, since $K$ is of characteristic zero, for every $\varphi\in K[[t]]$, we have
\[
\text{trop} \left( d^j\varphi\right) = \left\{ i-j : i\in \text{trop} (\varphi)\cap \mathbb{Z}_{\geq j}\right\}
\]
and so
\[
\nu\left( d^j\varphi\right) = \min \left(\text{trop} (\varphi)\cap \mathbb{Z}_{\geq j}\right) -j.
\]
Therefore $\trop(\varphi)$ determines the valuation of the derivatives of $\varphi$ of all orders.
Our notation for these valuations is as follows.
\begin{definition}				
A subset $S\subseteq\mathbb{Z}_{\geq0}$ induces a mapping $\Val_S:\mathbb{Z}_{\geq 0}\to\mathbb{Z}_{\geq0}\cup\{\infty\}$ given by 
\begin{equation}\label{tropSol2}
\Val_S(j):= \begin{cases}
s-j,&\text{ with }s=\text{min}\{\alpha\in S\::\:\alpha\geq j\},\\
\infty,&\text{ when } S\cap \mathbb{Z}_{\geq j}=\emptyset.
\end{cases}
\end{equation}
\end{definition}

\begin{example}
Consider the set $S:=\{0,1,2,3,7,8\}$. We have
$\Val_{S}(4)=\min \{s\in S \mid s\ge 4\}-4=7-4=3$,
while $\Val_{S}(9)=\infty$.
\end{example}

\subsection{Tropical differential algebra}
We will denote by $\mathbb{T}_{\ge0}$ the semiring $\mathbb{T}_{\ge0}=(\mathbb{Z}_{\geq0}\cup\{\infty\},\oplus,\odot)$, with $a\oplus b=\text{min}\{a,b\}$ and $a\odot b=a+b$. 
Later, in Section~\ref{sec:bases}, 
we will also invoke the semiring $\mathbb{T} = (\mathbb{R}\cup\{\infty\},\oplus,\odot)$ with the same operations.

\begin{definition}
A \newword{tropical differential polynomial} in the variables $x_1,\ldots,x_n$ of order less than or equal to $r$ is an expression of the form 
\begin{equation}\label{tdp}
\varphi=\varphi(x_1,\ldots,x_n)=\underset{M\in\Lambda}{\bigoplus} a_{M}x^{\odot M},
\end{equation}
where $M=(M_{ij})_{\substack{1\leq i\leq n\\ 0\leq j\leq r}}$ is a matrix in $\mathcal{M}_{n\times(r+1)}(\mathbb{Z}_{\geq 0})$, $a_{M}\in\mathbb{T}_{\ge0}$ and $\Lambda\subset \mathcal{M}_{n\times(r+1)}(\mathbb{Z}_{\geq0})$ is a finite set.
\end{definition}
Again the multi-index notation $x^{\odot M}$ stands for $\bigodot_{\substack{1\leq i\leq n\\ 0\leq j\leq r}}{x_{ij}}^{\odot M_{ij}}$.
The set of tropical differential polynomials in $ x_{1},\ldots ,x_{n} $ will be denoted by $\mathbb{T}\{x_1,\ldots,x_n\}$. 

A tropical differential polynomial $\varphi$ as in~\eqref{tdp} induces an evaluation mapping from $\mathcal{P}(\mathbb{Z}_{\geq0})^n$ to $\mathbb{Z}_{\geq0}\cup\{\infty\}$
given by 
 \[
 \varphi(S)=\underset{M\in\Lambda}{\text{min}}\{a_{M}+\sum_{\substack{1\leq i\leq n\\ 0\leq j\leq r}}M_{ij}\cdot \Val_{S_i}(j)\}
\]
where $\Val_{S_i}(j)$ is defined as in~\eqref{tropSol2}.

\begin{definition}\label{def:tropical solution}
An $n$-tuple $S=(S_1,\ldots,S_n)\in \mathcal{P}(\mathbb{Z}_{\geq0})^n$ is said to be a \newword{solution} of the tropical differential polynomial $\varphi$ in~\eqref{tdp} if either

\begin{enumerate}
\item there exist $M_1,M_2\in\Lambda$, $M_1\neq M_2$, such that $\varphi(S)=a_{M_1}\odot\varepsilon_{M_1}(S)=a_{M_2}\odot\varepsilon_{M_2}(S)$; or
\item $\varphi(S)=\infty$.
\end{enumerate}
\end{definition}
Let $A\subset\mathbb{T}_{\ge0}\{x_1,\ldots,x_n\}$ be a system of tropical differential polynomials. An $n$-tuple $S\in \mathcal{P}(\mathbb{Z}_{\geq0})^n$ is a \newword{solution} of $A$ when it is a solution of every tropical polynomial in $A$.
We denote the set of solutions of~$A$ by
\[
\Sol (A) := \left\{ S\in {(\mathcal{P}(\mathbb{Z}_{\geq0}))}^n : S \text{ is a solution of } \varphi \text{ for every }\varphi\in A\right\}.
\]

\begin{example}
Consider the tropical differential polynomial 
\[
\varphi=(x_{12})\oplus (2\odot x_{10})\oplus 1
\]

A set $S\in\mathcal{P}(\mathbb{Z}_{\geq 0})$ is a solution of $ \varphi $ if   
 $\min\{\Val_{S}(2) ,2+\Val_{S}(0), 1\}$ is attained at least twice. 
 This is equivalent, according to which pair of terms attain the minimum, to one of the following conditions being true.
 \begin{enumerate}
\item $\Val_S(2)= 2+\Val_S(0)\le 1$
\item $\Val_S(2)=1 \le 2+\Val_S(0)$
\item $2+\Val_S(0)=1 \le \Val_S(2)$.
\end{enumerate}
The first and third conditions do not hold for any~$S$, because $\Val_S(0)\ge0$.
The second condition holds if and only if
$2\not\in S$ and $3\in S$,
so just these sets $S$ are the solutions of~$\varphi$.
\end{example}

\subsection{Tropicalisation of differential polynomials} \label{Tropicalisation}
Let $P$ be a differential polynomial as in Equation~\eqref{pdif2}. The \newword{tropicalisation} of $P$ is the tropical differential polynomial 
\begin{equation*}
\text{trop}(P): = \underset{M\in\Lambda}{\bigoplus} \nu(\varphi_{M})x^{\odot M} .
\end{equation*}

\begin{definition}
Let $I\subset K[[t]]\{x_1,\ldots,x_n\}$ be a differential ideal. Its \newword{tropicalisation} is the set of tropical differential polynomials $\text{trop}(I)=\{ \text{trop}(P) : P\in I\}$.
\end{definition}

\begin{example}
We consider the differential polynomial $ P=tx_{12}^{3}x_{23}+(1+t^2)x_{13}^2$. Its tropicalisation is 
\[
\text{trop}(P)=\left( \nu(t)\odot x_{12}^{\odot 3}\odot x_{23}\right) \oplus  \left( \nu(1+t^2)\odot x_{13}^{\odot 2}\right) =\min\{1+3x_{12}+x_{23}, 2x_{13}  \}.
\] 
\end{example}

\section{The initial part of a differential ideal}
In this section we give definitions of $ S$-initial parts of differential polynomials and differential ideals.
We then prove our extension of the Fundamental Theorem of~\cite{ArocaGarayToghani:2016} to include a criterion on monomial-free $S$-initial parts (Theorem~\ref{pp}). 

Suppose that $Q$ is a differential polynomial,
$$ Q(x)=\sum_{M\in\Lambda }\psi_{M}x^M\in K((t))\{x_{1},\ldots,x_{n}\},$$
and that $ S =(S_{1},\ldots,S_{n})\subset \mathcal{P}(\mathbb{Z}_{\geq 0})^n$. We define the differential polynomial $ Q_{S}(x) $ by changing each variable $ x_{ij} $ to $ t^{\Val_{S_{i}}(j)}x_{ij} $ in $ Q$:
 \begin{equation*}
   Q_{S}(x):=
  \begin{cases}
   t^{-\trop(Q)(S)}\,Q(t^{\Val_{S_{i}}(j)}x_{ij})_{i, j},&\text{ when } \trop(Q)(S)\neq \infty,\\
0,&\text{ when } \trop(Q)(S)=\infty.
\end{cases}
 \end{equation*}

 \begin{lemma}\label{newpolynomial}
  The polynomial $Q_{S}(x)$ is in the ring $K[[t]]\{x_1,\ldots,x_n\}$, 
  i.e., its coefficients have valuations are greater than or equal to zero. Also, $Q_{S}(x)$ has a coefficient of valuation zero.
\end{lemma}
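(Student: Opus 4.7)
The plan is to compute the coefficient of each differential monomial $x^M$ in $Q_S(x)$ explicitly, and to read off both assertions from the fact that $\trop(Q)(S)$ is defined as a minimum of the same quantities that appear as valuations. I assume throughout that $\trop(Q)(S)\neq\infty$; the alternative case gives $Q_S=0$, which is trivially in $K[[t]]\{x_1,\dots,x_n\}$ (and for which the second assertion must be interpreted as vacuous).

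First I would substitute $x_{ij}\mapsto t^{\Val_{S_i}(j)}x_{ij}$ in each monomial $\psi_M x^M$ of $Q(x)$. With the convention that $t^{\infty}=0$, any $M\in\Lambda$ with $M_{ij}>0$ at some place where $\Val_{S_i}(j)=\infty$ contributes zero after substitution, which is consistent with such an $M$ contributing $\infty$ to the minimum defining $\trop(Q)(S)$ and hence never attaining it. For every other $M\in\Lambda$, the monomial becomes
\[
\psi_M\,t^{\sum_{i,j} M_{ij}\,\Val_{S_i}(j)}\,x^M.
\]
After the overall rescaling by $t^{-\trop(Q)(S)}$, the coefficient of $x^M$ in $Q_S(x)$ is therefore
\[
\psi_M\,t^{\,\sum_{i,j} M_{ij}\,\Val_{S_i}(j)\;-\;\trop(Q)(S)},
\]
whose valuation equals
\[
\nu(\psi_M)+\sum_{i,j} M_{ij}\,\Val_{S_i}(j)\;-\;\trop(Q)(S).
\]

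Second, by the evaluation formula for the tropicalisation, $\trop(Q)(S)$ is exactly the minimum over $M\in\Lambda$ of $\nu(\psi_M)+\sum_{i,j}M_{ij}\Val_{S_i}(j)$. So every coefficient of $Q_S(x)$ has valuation $\geq 0$, which proves the first assertion. Since $\Lambda$ is finite and the minimum is finite by our standing assumption, it is attained at some $M^\star\in\Lambda$, and for that $M^\star$ the displayed valuation is exactly $0$, giving the second assertion.

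The calculation itself is routine; the one step that needs care is the bookkeeping for indices $(i,j)$ with $\Val_{S_i}(j)=\infty$, where a literal substitution of $t^\infty$ is meaningless. I would handle this by pointing out that those $M\in\Lambda$ for which the substituted monomial is not a well-defined element of $K((t))\{x_1,\dots,x_n\}$ are precisely the $M$ for which the corresponding term of $\trop(Q)(S)$ is $\infty$, so they may be discarded without loss—a reformulation that makes $Q_S(x)$ a finite sum of genuine elements of $K((t))\{x_1,\dots,x_n\}$ to which the valuation computation above applies unambiguously.
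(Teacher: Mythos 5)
Your proof is correct and follows essentially the same computation as the paper: substitute $x_{ij}\mapsto t^{\Val_{S_i}(j)}x_{ij}$, rescale by $t^{-\trop(Q)(S)}$, and observe that each resulting coefficient has valuation $\nu(\psi_M)+\sum_{i,j}M_{ij}\Val_{S_i}(j)-\trop(Q)(S)\geq 0$, with equality where the minimum defining $\trop(Q)(S)$ is attained. In fact you are slightly more careful than the paper's write-up: you explicitly handle the indices with $\Val_{S_i}(j)=\infty$ (via the convention $t^\infty=0$ and the observation that such $M$ never attain the minimum), and you spell out why the second assertion follows from the minimum being achieved at some $M^\star$ — both points the paper leaves implicit.
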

\begin{proof}
   We can write
   \begin{align*}
\label{nnnn}
  Q_{S}(x) &= t^{-\trop(P)(S)}\,Q (t^{\Val_{S_{i}}(j)}x_{ij})_{\substack{ i,j}} 
  \\&=  t^{-\trop(Q)(S)}\sum_{M\in \Lambda} \psi_{M}\prod_{i,j} \big(t^{\Val_{S_{i}}(j)}x_{ij}\big)^{M_{ij}} 
  \\
   &=\sum_{M\in \Lambda} \psi_{M}t^{-\trop(Q)(S)+\sum M_{ij}\Val_{S_{i}}(j)}  x^M.
 \end{align*}
The valuation of the coefficient of $x^M$ is
\begin{align*}
\nu(t^{-\trop(Q)(S)+\sum M_{ij}\Val_{S_{i}}(j)})&=\\
-\trop(Q)(S)+\sum M_{ij}\Val_{S_{i}}(j) \geq- \nu(\psi_{M})+ \nu(\psi_{M})&=0.\qedhere
\end{align*}
\end{proof} 

Let $ \overline{Q_{S}}(x)$ be the image of $ Q_{S}$ in the ring $K\{x_1,\ldots,x_n\}$, under the map sending each coefficient $\psi\in K[[t]]$ to its image in the residue field~$K$. 
Then Lemma \ref{newpolynomial} implies
that $\overline{Q_{S}}$ is well-defined and nonzero.

 Let $P(x)=\sum_{M\in\Lambda }\varphi_{M}x^M$ be a differential polynomial with coefficients in the power series ring. The \newword{$ S$-initial part} of $ P $ is
 \begin{equation}\label{eq:In_S}
\In_{S}(P):= \begin{cases}
\overline{P_{S}}(x),&\text{ when } \trop(P)(S)\neq \infty,\\
0,&\text{ when } \trop(P)(S)=\infty.
\end{cases}
\end{equation}
In the first case we have 
  \begin{align*}
  \In_{S}(P)&=\overline{P_{S}}(x)
  \\&=\overline{\sum_{M\in\Lambda } \varphi_{M}t^{-\trop(P)(S)+\sum M_{ij}\Val_{S_{i}}} x^M}
    \\&= \sum_{M\in \Upsilon}\overline{\varphi_M t^{-\nu(\varphi_{M})}x^M},
   \end{align*}
   where $\Upsilon= \{ M\in \Lambda \mid \trop(P)(S)= \nu(\varphi_{M})+\sum _{i,j}M_{ij}\Val_{S_{i}}(j)\}$.
\begin{example}
We exemplify the second case of equation~\eqref{eq:In_S}.
Consider the differential polynomial $$ P(x)=tx_{14}+t^2x_{15}, $$
 and take $S=\{1,2,3\}$. We have $\trop(P)(S)=\infty$, so $ \In_{S}(P)=0$.
\end{example}

\begin{remark}\label{rem:HuGao}
Consider a differential polynomial $P$ and a tuple $S\in (\mathbb{P}(\mathbb{Z}_{\geq 0}))^n$.  Let $\In_S(P)$ be the $S$-initial part of $P$ as defined above, and $\operatorname{in}_S(P)$ the $S$-initial part of $P$ as defined by Hu and Gao \cite{HuGao2019}. 
We show that these two initial parts carry the same information. 
On one hand, we have the relation 
\[
\In_S(P)=\operatorname{in}_S(P)|_{t=1}.
\]
On the other, given $\In_S(P)$, we can recover $\operatorname{in}_S(P)$ up to a global factor of form $t^i$
by multiplying each monomial by a suitable power of~$t$.
Concretely,
if $\In_S(P)=\sum_{M\in\Lambda}a_M x^M$ for scalars $a_M\in K$, then
\[\operatorname{in}_S(P) = t^i\sum_{M\in\Lambda} t^{\trop(Q)(S)}a_M x^M\]
for some $i\in\mathbb{Z}$.
\end{remark}


\begin{example}\label{l}
Consider the differential polynomial
 $$P(x)=tx_{11}+t^2 x_{13}+ t^3\in K[[t]]\{x_{1}\}.$$
Take the set $S=\{2,3\}\subset\mathbb{Z}_{\geq 0}$.
We obtain $$ \trop(P)(S)=\trop(tx_{11})(S)=\trop(t^2 x_{13})(S)= 2 .$$
 The $ S$-initial part of $ P $ equals
\[
\In_{S}(P)= \overline{t^{-1}tx_{11}}+\overline{t^2t^{-2}x_{13}}=x_{11}+x_{13}\in  K\{x_{1}\}.
\]
Compare this with the $ S$-initial part of $ P $ defined by Hu and Gao:
\[
 \operatorname{in}_S(P)= tx_{11}+t^2 x_{13}\in  K[[t]]\{x_{1}\}.
\]

\end{example}

The next example shows that, in general,
\[
\In_{S}(d(P))\neq d(\In_{S}(P)),
\] 
when $ P\in K[[t]]\{x_{1},\ldots,x_{n}\} $ is a differential polynomial. \begin{example}\label{l2}
Let $P$ be the polynomial of Example~\ref{l}.
The derivative of $ P $ is $$ d(P)= tx_{12}+x_{11}+t^2x_{14}+2tx_{13}+3t^2.$$
 We have
 $$\In_{S}(d(P))=x_{11}+2x_{13}\quad\mbox{and}\quad d(\In_{S}(P))=x_{12}+x_{14}$$
from which we see
$d(\In_{S}(P))\neq \In_{S}(d(P))$.
\end{example}

\begin{lemma}\label{mm}
Let $ P_{1},P_{2}\in K[[t]]\{x_{1},\ldots, x_{n}\} $ be two differential polynomials. We have $$ \In_{S}(P_{1}P_{2})= \In_{S}(P_{1})\In_{S}(P_{2}).$$
\end{lemma}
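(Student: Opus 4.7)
The plan is to argue directly from the definition, leveraging the ring-theoretic fact that the substitution $\sigma\colon x_{ij}\mapsto t^{\Val_{S_i}(j)}x_{ij}$ extends to an endomorphism of $K[[t]]\{x_1,\ldots,x_n\}$ (under the convention $t^\infty=0$, which harmlessly kills any variable $x_{ij}$ with $\Val_{S_i}(j)=\infty$). By multiplicativity, $\sigma(P_1 P_2)=\sigma(P_1)\sigma(P_2)$, so the argument amounts to tracking the normalising powers of $t$ and then passing to the residue field $K$.

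First I would dispose of the degenerate case: if $\trop(P_i)(S)=\infty$ for some $i$, then $\In_S(P_i)=0$, so the right-hand side vanishes; and since every monomial of $P_i$ must involve a variable $x_{kl}$ with $\Val_{S_k}(l)=\infty$, the same is true of every monomial of $P_1P_2$, whence $\trop(P_1P_2)(S)=\infty$ and $\In_S(P_1P_2)=0$ as well.

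In the main case, $\trop(P_i)(S)\in\mathbb{Z}_{\geq 0}$ for both $i$, and $\sigma(P_i)=t^{\trop(P_i)(S)}P_{i,S}$, $\sigma(P_1P_2)=t^{\trop(P_1P_2)(S)}(P_1P_2)_S$. Multiplicativity of $\sigma$ therefore gives
\[
t^{\trop(P_1P_2)(S)}(P_1P_2)_S \;=\; t^{\trop(P_1)(S)+\trop(P_2)(S)}\,P_{1,S}P_{2,S}.
\]
By Lemma~\ref{newpolynomial} each $P_{i,S}$ has a coefficient of valuation zero, so $\overline{P_{i,S}}=\In_S(P_i)$ is a nonzero element of the integral domain $K\{x_1,\ldots,x_n\}$; hence $\overline{P_{1,S}P_{2,S}}=\In_S(P_1)\In_S(P_2)\neq 0$, and so $P_{1,S}P_{2,S}$ itself has a coefficient of valuation zero. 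Since $(P_1P_2)_S$ has one too (Lemma~\ref{newpolynomial} again), comparing the two sides of the displayed equation forces the two exponents of $t$ to agree and yields $(P_1P_2)_S=P_{1,S}P_{2,S}$; reducing modulo $t$ then gives the claim. The one step with real content beyond bookkeeping is the equality $\trop(P_1P_2)(S)=\trop(P_1)(S)+\trop(P_2)(S)$ that emerges in this comparison: the inequality $\ge$ is automatic from the definitions, and the reverse is precisely where the integral-domain hypothesis on $K[[t]]$ does the real work, preventing cancellation of the leading contributions of $P_{1,S}$ and $P_{2,S}$.
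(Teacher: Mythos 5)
Your argument is correct and, at bottom, follows the same scheme the paper intends: $\In_S$ is the composite of the ring homomorphism $\sigma\colon x_{ij}\mapsto t^{\Val_{S_i}(j)}x_{ij}$, normalisation by $t^{-\trop(\cdot)(S)}$, and reduction mod $t$, and both $\sigma$ and reduction are multiplicative. The paper's entire proof is the single sentence that the result ``is true since $\trop(P_1P_2)(S)=\trop(P_1)(S)+\trop(P_2)(S)$,'' leaving both the substitution bookkeeping and the proof of that additivity to the reader. You supply both: in particular, you \emph{derive} the additivity from the fact that $\In_S(P_1)\In_S(P_2)\neq 0$ in the integral domain $K\{x_1,\ldots,x_n\}$, so that $P_{1,S}P_{2,S}$ still has a coefficient of valuation zero, forcing the two normalising exponents of $t$ to agree. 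That Gauss-lemma-style step is the only point with real content, and you correctly flag $\ge$ as the easy inequality. You also handle the degenerate case $\trop(P_i)(S)=\infty$, which the paper's one-liner silently omits. In short: same approach, carried out in full, with the key sub-claim proved rather than asserted.
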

\begin{proof}
This is true since $ \trop(P_{1}P_{2})(S)= \trop(P_{1})(S)+ \trop(P_{2})(S) $.  
\end{proof}

\begin{obs}\label{rr}
Let $ P\in K[[t]]\{x_{1},\ldots, x_{n}\} $ be a differential polynomial and $ a\in \mathbb{Z} $. We have 
$$ \In_{S}(t^{a}P)=\In_{S}(P) .$$ 
\end{obs}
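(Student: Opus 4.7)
The plan is to unwind the formula for $\In_S$ and track what multiplication by $t^a$ does at each stage. Writing $P=\sum_{M\in\Lambda}\varphi_M x^M$, we have $t^aP=\sum_{M\in\Lambda}(t^a\varphi_M)x^M$ and, crucially, $\nu(t^a\varphi_M)=\nu(\varphi_M)+a$. Feeding this into the definition of the evaluation of a tropical polynomial yields
\[
\trop(t^aP)(S)=\min_{M\in\Lambda}\Bigl\{a+\nu(\varphi_M)+\sum_{i,j}M_{ij}\Val_{S_i}(j)\Bigr\}=a+\trop(P)(S),
\]
so the two tropical evaluations are either both $\infty$ or both finite.

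I would then dispatch the infinite case first: if $\trop(P)(S)=\infty$ then $\trop(t^aP)(S)=\infty$ also, and both initial parts vanish by definition. In the finite case, the explicit formula
\[
\In_S(P)=\sum_{M\in\Upsilon}\overline{\varphi_M t^{-\nu(\varphi_M)}x^M},
\qquad \Upsilon=\{M\in\Lambda:\trop(P)(S)=\nu(\varphi_M)+\sum_{i,j}M_{ij}\Val_{S_i}(j)\},
\]
carries over verbatim to $t^aP$ once we add $a$ to both sides of the defining condition for $\Upsilon$; thus the index set is unchanged. For the coefficients themselves, the identity $\overline{(t^a\varphi_M)\,t^{-\nu(t^a\varphi_M)}}=\overline{\varphi_M\,t^{-\nu(\varphi_M)}}$ is immediate from $\nu(t^a\varphi_M)=\nu(\varphi_M)+a$, and so $\In_S(t^aP)=\In_S(P)$ term by term.

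There is no real obstacle here; the statement is a bookkeeping check that the normalisation factor $t^{-\trop(Q)(S)}$ in the definition of $Q_S$ was chosen precisely so as to kill any overall power of $t$ before reducing modulo the maximal ideal of $K[[t]]$. One could alternatively derive the observation from Lemma~\ref{mm} together with the easy fact that $\In_S(t^a)=1$ when viewed as a constant differential polynomial, but the direct argument above seems cleanest.
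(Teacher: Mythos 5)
Your proof is correct. The paper states Observation~\ref{rr} without a written proof, and your calculation — showing $\trop(t^aP)(S)=a+\trop(P)(S)$ so that the normalisation exponent cancels the factor $t^a$, together with the observation that the index set $\Upsilon$ of minimum-attaining terms is unchanged — is exactly the bookkeeping the paper leaves to the reader. The alternative you mention, deducing the statement from Lemma~\ref{mm} together with $\In_S(t^a)=1$, is also sound and is arguably the shorter route given that Lemma~\ref{mm} immediately precedes the observation.
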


Let $ I\subset K[[t]]\{x_{1},\ldots,x_{n}\} $ be a differential ideal. Its \newword{$ S $-initial ideal} is 
$$ \In_{S}(I)=\langle \In_{S}(P ) \mid {P\in I}\rangle \subset K\{x_{1},\ldots,x_{n}\}.$$
The next lemma is a more explicit version of \cite[Lemma 2.6]{HuGao2019}.
\begin{lemma}\label{nn}
Let $ I\subset K[[t]]\{x_{1},\ldots,x_{n}\}  $ be a differential ideal. For every $ G\in \In_{S}(I) $ there exists $ g\in I $ such that $  \In_{S}(g)=G$. 
\end{lemma}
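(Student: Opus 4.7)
The plan is to take any presentation $G = \sum_k H_k \In_S(P_k) \in \In_S(I)$ (with $P_k \in I$ and $H_k \in K\{x_1,\ldots,x_n\}$) and assemble the summands, suitably rescaled, into a single element $g \in I$ with $\In_S(g) = G$. The main tools will be Lemma~\ref{mm} (multiplicativity of $\In_S$) and Observation~\ref{rr} (invariance under multiplication by $t^a$). The degenerate case $G = 0$ is handled by $g = 0$, so I will assume $G \neq 0$.

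First I would expand each $H_k \in K\{x_1,\ldots,x_n\}$ as a $K$-linear combination of monomials, reducing to a presentation $G = \sum_k c_k x^{M_k} \In_S(P_k)$ with $c_k \in K^{\times}$ and $M_k$ matrices of exponents. For each $k$, set $\tilde H_k := c_k x^{M_k}$, regarded as an element of $K[[t]]\{x_1,\ldots,x_n\}$ with scalar coefficient from $K \subset K[[t]]$; since $\nu(c_k) = 0$, a direct computation from the definition of $\In_S$ gives $\In_S(\tilde H_k) = c_k x^{M_k}$, and Lemma~\ref{mm} then yields $\In_S(\tilde H_k P_k) = c_k x^{M_k} \In_S(P_k)$. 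The tropical values $v_k := \trop(\tilde H_k P_k)(S)$ can differ across $k$, so I align them: set $c := \max_k v_k$ and put $g_k := t^{\,c - v_k} \tilde H_k P_k \in I$. By Observation~\ref{rr} each $g_k$ still satisfies $\In_S(g_k) = c_k x^{M_k} \In_S(P_k)$, while by construction $\trop(g_k)(S) = c$ for every $k$. Define $g := \sum_k g_k$, which lies in~$I$.

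To conclude, I must verify $\In_S(g) = G$. Because all summands share the same tropical value $c$ at $S$, the operation $Q \mapsto Q_S$ is $K$-linear across them, so $g_S = \sum_k (g_k)_S$ in $K[[t]]\{x_1,\ldots,x_n\}$. Reducing modulo $t$ then produces $\overline{g_S} = \sum_k \overline{(g_k)_S} = \sum_k \In_S(g_k) = G$. The step I expect to be the most delicate is guarding against cancellation in this mod-$t$ reduction: were the sum to vanish in $K\{x_1,\ldots,x_n\}$, we would have $\trop(g)(S) > c$ and $\In_S(g)$ would be controlled by higher-order terms in $t$, generally different from $G$. But the reduction equals $G$ by our rewriting, and $G \neq 0$ by assumption, so no cancellation occurs and $\In_S(g) = G$ as desired.
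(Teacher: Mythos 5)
Your proof is correct and takes essentially the same approach as the paper's: decompose $G$ as a sum of monomial multiples of $\In_S(P_k)$ with $P_k\in I$, rescale each summand by a power of $t$ to align the tropical values at $S$, and sum. The only cosmetic difference is that you align to the maximum $c=\max_k v_k$, keeping all $t$-exponents nonnegative so that $g\in I$ directly, whereas the paper normalizes each term to tropical value $0$ (introducing negative powers of $t$) and then clears them at the end by multiplying by $t^A$; you are also a bit more explicit than the paper about why cancellation cannot raise $\trop(g)(S)$ above $c$.
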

\begin{proof}
We can write $ G=\sum \alpha_{M}x^M\In_{S}(G_{M}) $ with $G_M\in I$ for every $M$. 
Take $$ A_{M}=\trop(G_{M})(S)+\sum_{i,j} M_{ij}\Val_{S_{i}}(j), \quad g=\sum_{M} \psi_{M} t^{-A_{M}} x^MG_{M},$$ 
 where $ \overline{\psi_{M}}=\alpha_{M} $ and $ \nu(\psi_{M})=0 $. We calculate

\begin{align*}
\In_{S}(g)&=\overline{g_{S}(x)} =\overline{t^{-\trop(g)(S)}g (t^{\Val_{S_{i}}(j)}  x_{ij})}
  \\&= \overline{t^{0} \sum_{M} \psi_{M} t^{- A_{M}} \prod_{i,j}(t^{\Val_{S_{i}}(j)}x_{ij})^{M_{i,j}}G_{M}(t^{\Val_{S_{i}}(j)}x_{ij})}
  \\&= \overline{ \sum_{M} \psi_{M} t^{- A_{M}} \Big(\prod_{i,j}(t^{\Val_{S_{i}}(j)})^{M_{i,j}}\Big) x^MG_{M}(t^{\Val_{S_{i}}(j)}x_{ij})}  
  \\&=  \overline{ \sum_{M} \psi_{M} t^{-\trop(G_{M})(S)} x^MG_{M}(t^{\Val_{S_{i}}(j)}x_{ij})}
  \\&=  \sum_{M} \overline{\psi_{M}} x^M \overline{t^{-\trop(G_{M})(S)} G_{M}(t^{\Val_{S_{i}}(j)}x_{ij})}  
  \\&= \sum_{M} \alpha_{M} x^M\In_{S}(G_{M}) .
\end{align*}
We choose $ A=\max_{M\in \Lambda} \{A_{M}\} $ and  take $ H=t^{A}g\in I $.  By Observation \ref{rr} we have 
\[\In_{S}( H)=\In_{S}(t^{A}g)=\In_{S}(g)=G.\qedhere\]    
 \end{proof}

\begin{theorem}\label{pp}
Let $K$ be an uncountable algebraically closed field of characteristic zero. 
Let $ I\subset K[[t]]\{x_{1},\ldots,x_{n}\} $  be a differential ideal. Then the following three subsets of $ \left(\mathcal{P}(\mathbb{Z}_{\geq 0})\right)^n$ coincide: 
\begin{enumerate}
 \item\label{set1} $\trop(\Sol(I))$,     
 \item\label{set2}  $\Sol(\trop(I))$,   
 \item\label{set3} $ A=\{S\subset \left(\mathcal{P}(\mathbb{Z}_{\geq 0})\right)^n\mid \In_{S}(I) $ does not contain a monomial $ \} $.
\end{enumerate}
\end{theorem}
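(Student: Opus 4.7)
The equivalence $(\ref{set1}) \Leftrightarrow (\ref{set2})$ is precisely the Fundamental Theorem of Tropical Differential Geometry from \cite{ArocaGarayToghani:2016} (which is where the hypothesis that $K$ is uncountable and algebraically closed is used). So the plan is to prove the new equivalence $(\ref{set2}) \Leftrightarrow (\ref{set3})$, which holds without those hypotheses.

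The key observation is to read off, from the explicit formula
\[
\In_S(P) = \sum_{M \in \Upsilon}\overline{\varphi_M t^{-\nu(\varphi_M)}}\, x^M,
\]
when $\In_S(P)$ is a monomial: this happens exactly when $|\Upsilon| = 1$, i.e., when the minimum defining $\trop(P)(S)$ is attained at a unique index $M \in \Lambda$ (and $\trop(P)(S) \neq \infty$). By Definition~\ref{def:tropical solution}, this is precisely the condition that $S$ is \emph{not} a solution of $\trop(P)$. Conversely, $\In_S(P)$ is non-monomial (either $0$ or has at least two terms) iff $S$ is a solution of $\trop(P)$.

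For $(\ref{set2}) \Rightarrow (\ref{set3})$, I would argue by contrapositive. Suppose $\In_S(I)$ contains a nonzero monomial $m = c\,x^M$. By Lemma~\ref{nn}, there exists $g \in I$ with $\In_S(g) = m$. Then by the key observation, the minimum in $\trop(g)(S)$ is attained exactly once, so $S$ is not a solution of $\trop(g) \in \trop(I)$. Hence $S \notin \Sol(\trop(I))$. For $(\ref{set3}) \Rightarrow (\ref{set2})$, assume $\In_S(I)$ contains no monomial and take any $P \in I$. Then $\In_S(P) \in \In_S(I)$ is non-monomial, so either $\In_S(P) = 0$ (whence $\trop(P)(S) = \infty$, giving case (ii) of Definition~\ref{def:tropical solution}) or $\In_S(P)$ has at least two distinct terms $x^{M_1}, x^{M_2}$ with $M_1 \neq M_2$, both attaining $\trop(P)(S)$, giving case (i). Either way $S$ solves $\trop(P)$; since $P$ was arbitrary, $S \in \Sol(\trop(I))$.

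There is no real obstacle here: the whole argument reduces to the explicit description of $\In_S(P)$ together with Lemma~\ref{nn}, which plays the decisive role in the $(\ref{set2}) \Rightarrow (\ref{set3})$ direction by guaranteeing that every element of $\In_S(I)$ actually arises as $\In_S(g)$ for some $g \in I$ (rather than merely as a $K\{x_1,\ldots,x_n\}$-linear combination of such initial parts, which would not suffice to produce a single $g \in I$ with monomial initial part).
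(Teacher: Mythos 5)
Your proof is correct, and it takes a slightly different route than the paper. You invoke the full Fundamental Theorem of Tropical Differential Geometry (Theorem 8.1 of \cite{ArocaGarayToghani:2016}) as a black box for the equivalence (\ref{set1})$\Leftrightarrow$(\ref{set2}), and then prove (\ref{set2})$\Leftrightarrow$(\ref{set3}) directly from the observation that $\In_S(P)$ is a monomial exactly when the minimum in $\trop(P)(S)$ is finite and attained at a unique index, together with Lemma~\ref{nn} to pass from a monomial in the ideal $\In_S(I)$ to a specific $g\in I$ with $\In_S(g)$ monomial. The paper instead closes a three-step cycle: it cites only Proposition 5.2 of \cite{ArocaGarayToghani:2016} for (\ref{set1})$\subseteq$(\ref{set2}), proves (\ref{set2})$\subseteq$(\ref{set3}) by the same argument as yours, and then proves (\ref{set3})$\subseteq$(\ref{set1}) by importing a construction from the \emph{proof} of Theorem 8.1 --- a witness polynomial $g\in I$ whose tropicalization at $S$ attains its minimum at a single monomial, forcing $\In_S(g)$ to be monomial. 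Your version is shorter and makes visible that (\ref{set2})$\Leftrightarrow$(\ref{set3}) requires neither uncountability nor algebraic closure (those hypotheses entering only through the Fundamental Theorem you cite); the paper's version has the modest advantage of re-deriving the hard inclusion (\ref{set2})$\subseteq$(\ref{set1}) through the initial-ideal criterion, thereby displaying how the new characterisation fits inside the proof of the Fundamental Theorem rather than sitting alongside it.
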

\begin{proof}
Set~(\ref{set1}) is contained in set~(\ref{set2}) by Proposition 5.2 of~\cite{ArocaGarayToghani:2016}. 
We now prove that set~(\ref{set2}) is contained in set~(\ref{set3}). Let $ S\in \Sol(\trop(I)) $. 
Then for any polynomial $ P=\sum_{M\in\Lambda }\varphi_{M}x^M\in I $ the minimum of $\trop(P)(S)= \min\{\sum_{i,j} M_{ij}\Val_{S_{i}}(j)+\nu(\varphi_{M}) \mid M\in \Lambda\} $ is achieved twice or is $ \infty $. 
The terms attaining this minimum are the terms that make up $ \In_{S}(P) $,
so in neither case is $ \In_{S}(P) $ monomial. 
This implies that $\In_{S}(I)$ contains no monomial, 
because if it did, say the monomial $ G\in \In_{S}(I)$, then Lemma~\ref{nn} provides $ H\in I $ such that $\In_{S}(H)=G $.
So the containment of set~2 in set~3 is proved.

Finally, set~(\ref{set3}) is contained in set~(\ref{set1}). 
Let $ S=(S_{1},\ldots,S_{n})\in A $, and suppose $ S\notin \trop(\Sol(I)) $. 
In the proof of Theorem 8.1 in \cite{ArocaGarayToghani:2016}
one exhibits a polynomial $ g $ in $ I$, say
$$ g=\prod_{\substack{0\leq i\leq n, \\ j\in S_i,\, j\leq N_m }}(x_{ij})^{\alpha}
+h+t\lambda \in I$$
where
$h\in \langle x_{ij}, j\notin S_{i},j\leq N_m \rangle$ and $\lambda \in K[[t]]\{x_{0},\ldots,x_{n}\}$,
such that we have $ \trop(g)(S)=0 $ and the minimum in this tropicalisation is attained only at the first monomial on the right hand side. 
Then $ \In_{S}(g) $, the initial part of $ g $, is a monomial. This contradicts $ S\in A $.
\end{proof}



\section{Nonexistence of certain bases}\label{sec:bases}
This section is dedicated to bases of tropical differential ideals.
We first recall the notion from non-differential tropical geometry which we wish to generalise.
If $L$ is a valued field, then
for each element of the polynomial ring $f\in L[x_{1},\ldots,x_{n}]$ we can define a \newword{tropical hypersurface} $V_{\trop}(f)\subseteq\mathbb{T}^n$.
Then if $I\subseteq L[x_{1},\ldots,x_{n}]$ is an ideal, its \newword{tropical variety} is defined to be
\begin{equation}\label{eq:tropical variety}
V_{\trop}(I) = \bigcap_{f\in I}V_{\trop}(f).
\end{equation}
A \newword{tropical basis} for $I$ is a subset of~$I$ which can replace $I$ as the set over which the intersection is taken in equation~\eqref{eq:tropical variety}.
Crucially, every ideal $I$ has a \emph{finite} tropical basis \cite[Thm~11]{bogart2007computing}.

\subsection{Motivation}
We begin by comparing tropical solutions according to Grigoriev's definition (Definition~\ref{def:tropical solution})
with points of tropical varieties.
Let $ I $ be a nonzero differential ideal in $  K[[t]]\{x_{1},\ldots,x_{n}\} $.
For each natural number $r$
we can define the (non-differential) elimination ideal,
\begin{equation*}
 I_{r}=I \cap  K[[t]][x_{ij}\mid 1\leq i\leq n, 0\leq j\leq r]
\end{equation*}
which we will call a \newword{truncation}.
We form its tropical variety $V_{\trop}(I_{r})$, 
taking the valued field $L$ to be $K((t))$ with its $t$-adic valuation $\nu$.  
If $s\ge r$ is another natural number, then there is a projection map $V_{\trop}(I_{s})\to V_{\trop}(I_{r})$.
Let $V_\infty(I)\subseteq\mathbb{T}^{{n(\mathbb{Z}_{\geq 0})}}$ be the inverse limit of this system of maps.
By the ambient space $\mathbb{T}^{n(\mathbb{Z}_{\geq 0})}$ we mean the Cartesian product of a countable number of copies of~$\mathbb{T}$,
one for each pair of indices $(i,j)$ of a variable $x_{ij}$.

Let $\varphi\in K[[t]]^n$ be a solution of~$I$.
By the Fundamental Theorem of (non-differential) Tropical Geometry, 
the vector $(\nu(d^j\varphi_i) : 1\leq i\leq n, 0\leq j\leq r)$ lies in $V_{\trop}(I_{r})$ for each~$r$.
Therefore $(\nu(d^j\varphi_i) : 1\leq i\leq n, j\geq 0)$ is an element of $V_\infty(I)$.
We warn the reader that the converse does not hold in general!  
See Appendix~\ref{sec:denef} for an example: 
the appendix describes an element $(\nu(d^j\varphi_i) : 1\leq i\leq n, j\geq 0)\in V_\infty(I)$, but $\Sol(I)$ is empty.

Define the map
\begin{displaymath}
\begin{array}{cccc}
\Val:&(\mathcal{P}(\mathbb{Z}_{\geq 0}))^n &\rightarrow & 
\mathbb{T}^{n(\mathbb{Z}_{\geq 0})}\\
& S=(S_{1},\ldots,S_{n})&\mapsto & \left(
\mathrm{Val_{S_{i}}}(j)\right)_{i,j} \ .
\end{array}
\end{displaymath}
The map $\Val$ translates from Grigoriev's set-valued solutions to the infinite tropical vectors above,
so we have
\[\Val(\supp(\varphi)) = (\nu(d^j\varphi_i) : 1\leq i\leq n, j\geq 0)\in V_\infty(I).\]
By Theorem~\ref{pp}, we conclude that $\Val(\Sol(\trop(I)))\subseteq V_\infty(I)$.

Each ideal $I_{r}$ has a finite tropical basis.
Intersection of disjoint polyhedra is subadditive in codimension,
so the cardinality of a tropical basis for $I_{r}$ is not less than its codimension.
By the theory of Kolchin dimension polynomials \cite{Kolchin:1964},
the codimension of $I_{r}$ eventually grows linearly with $r$,
and therefore tropical bases grow at least this fast.

Whatever a basis $B$ for a tropical differential ideal $I$ should be,
one should be able to extract from it a tropical basis for $I_{r}$ for any given $r\in\mathbb N_{\geq 0}$.
By the above discussion,
this means one should be able to produce from $B$ a set of elements of $I_{r}$ of size $\Omega(r)$.
Any polynomial $f\in I$, say of order exactly $m$,
has $r-m$ derivatives $f, df, \ldots, d^{r-m}f$ lying in $I_{r}$, for each $r\ge m$.
This suggests that finite bases in the sense of the following definition may exist.

\begin{definition}\label{def:basis}
In this section, we will call a subset $G$ of $ I $ 
a \newword{tropical differential basis} for $ I $ if 
 \[
  \Sol(\trop(I))= \bigcap_{g\in G}\bigcap_{k=0}^{\infty} \Sol(\trop(d^kg)). 
 \]
\end{definition}

Observe that the containment
 \[
  \Sol(\trop(I))\subseteq \bigcap_{g\in G}\bigcap_{k=0}^{\infty} \Sol(\trop(d^kg)). 
 \]
is true for any $G\subset I$.

Some sufficiently simple differential ideals
do have tropical bases in the sense of Definition~\ref{def:basis}.

\begin{example}
Consider the differential ideal $I=\langle f\rangle_{d} $, where $f=x_{10}+x_{11}+x_{12}$. 
Its solutions are
\[\Sol(I) = \{
  a\big(\sum_{k=0}^\infty t^{3k}-t^{3k+2}\big) 
+ b\big(\sum_{k=0}^\infty t^{3k+1}-t^{3k+2}\big) 
: a,b\in K\}.\]
Theorem 8.1 in \cite{ArocaGarayToghani:2016} implies that $\Sol(\trop(I))=\trop(\Sol(I))$,
from which we can compute $\Sol(\trop(I))$:
\begin{multline*}
    \trop(\Sol(I)) = \{\emptyset,\quad  \mathbb{Z}_{\geq 0},\quad \\ 
    \mathbb{Z}_{\geq 0}\setminus \{ 3k : k\geq 0\},\quad   \mathbb{Z}_{\geq 0}\setminus 
    \{ 3k+1 : k\geq 0\} ,\quad  \mathbb{Z}_{\geq 0}\setminus \{ 3k+2 : k\geq 0\}   \}.
\end{multline*}
Now let $g=f-f'=x_{10}-x_{13}\in I$.
If $S\in \mathcal{P}(\mathbb{Z}_{\geq 0})$ is a solution of 
$\trop(d^kg)=x_{1k}\oplus x_{1(k+3)}$
then $\Val_S(k) = \Val_S(k+3)$.
So if $S$ is any element of $\bigcap_{k=0}^{r} \Sol(\trop(d^kg))$,
then $\Val_S$ is a periodic function on $\mathbb{Z}_{\geq 0}$ of period~3,
implying that $S = \{j-3 : j\in S,j\ge3\}$.
There are $2^3$ such sets~$S$.
It is then just a matter of checking which of these are solutions to each
$\trop(d^kf)=x_{1k}\oplus x_{1(k+1)}\oplus x_{1(k+2)}$ to see that
\begin{multline*}
    \bigcap_{k= 0}^{\infty} \Sol(\trop(d^kf)) \cap \bigcap_{k=0}^{\infty} \Sol(\trop(d^kg))=
  \{\emptyset,\quad  \mathbb{Z}_{\geq 0},\quad  \\
  \mathbb{Z}_{\geq 0}\setminus \{ 3k : k\geq 0\},\quad   \mathbb{Z}_{\geq 0}\setminus 
    \{ 3k+1 : k\geq 0\} ,\quad  \mathbb{Z}_{\geq 0}\setminus \{ 3k+2 : k\geq 0\}   \}
    \\=
\Sol(\trop(I)).
\end{multline*}
We conclude that the set $\{f,g \}$ meets our definition for a tropical differential basis of~$I$.
\end{example}


    

\subsection{Linear differential ideals}

This section is dedicated to an example.
Consider the differential ideal $I=\langle x_{12} + sx_{11} + x_{10}\rangle_d\subset K[[t]]\{x_1\}$ 
where $s$ is a very general element of~$K$.
(Any transcendental over~$\mathbb{Q}\subseteq K$ will suffice.
For a more precise statement see \cite[Section 4.4]{FGG}.)
Let $I=\langle f\rangle_d$.  
For any~$r\geq2$, the ideal $I_r$ is generated by the polynomials $d^i(x_{12} + sx_{11} + x_{10})$ for $i=0,1,\ldots,r-2$.
So it is linear in the differential variables.

The tropical variety of a linear ideal $J$
is the Bergman fan of the matroid of its variety $V(J)$, seen as a linear subspace \cite[Theorem 4.1.11]{MaclaganSturmfels:2015}.
In our case, $V(I_r)$ is cut out by the vanishing of the entries of the matrix-vector product 
$A\,(x_{10},x_{11},\ldots,x_{1r})^{\rm T}$,
where $A$ is the $(r+1)\times(r-1)$ band matrix
\[A=\begin{bmatrix}
1 & 0 & \hdots & 0\\
s & 1 &  & 0\\
1 & s &  & 0\\
0 & 1 &  & 0\\
\vdots &  & \ddots & \vdots\\
0 & 0 &  & 1\\
0 & 0 &  & s\\
0 & 0 & \hdots & 1
\end{bmatrix}.\]
By the assumption on $s$, the matrix $A$ has no vanishing $(r-1)\times(r-1)$ minors.
So the matroid of~$V(I_r)$ is the uniform matroid $U_{2,r+1}$ of rank~2 on the elements $\{x_{10}, x_{11}, \ldots, x_{1r}\}$,
whose nonempty proper flats are just its singletons $\{x_{1i}\}$.
The Bergman fan $V_{\trop}(I_{r})\subset\mathbb{T}^{r+1}$ is then the set of all vectors $(a_0, a_1,
\ldots, a_r)$ in which all of the  $a_i$  are equal to some constant $b$
except for perhaps one of them, say  $a_j$,  and we have  $a_j \geq b$
\cite[Definition 4.1.9]{MaclaganSturmfels:2015}.

\begin{proposition}\label{prop:no finite basis}
The above ideal $I$ has no finite tropical differential basis of linear forms in the sense of Definition~\ref{def:basis}.
\end{proposition}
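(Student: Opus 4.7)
The plan is to assume $G = \{g_1, \ldots, g_m\} \subseteq I$ is a finite set of linear forms and construct an $S$ in $\bigcap_{g \in G, k \ge 0}\Sol(\trop(d^k g))$ but not in $\Sol(\trop(I))$. I would first compute $\Sol(\trop(I))$ explicitly. Over the algebraically closed field $K$, solutions of $I$ are $K$-linear combinations of the two exponentials $\exp(\lambda_i t)$ whose exponents are the roots of $\lambda^2 + s\lambda + 1 = 0$, and the ratio $\lambda_1/\lambda_2$ is not a root of unity (since $s$ is transcendental). A short case check shows the support of any nonzero such combination is either $\mathbb{Z}_{\ge 0}$ or $\mathbb{Z}_{\ge 0} \setminus \{k\}$ for a unique~$k$, and Theorem~\ref{pp} then gives $\Sol(\trop(I)) = \{\emptyset, \mathbb{Z}_{\ge 0}\} \cup \{\mathbb{Z}_{\ge 0} \setminus \{k\} : k \ge 0\}$.

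I will take $S = \mathbb{Z}_{\ge 0} \setminus \{k_1, k_2\}$ with $k_1 < k_2 - 1$; then $S \notin \Sol(\trop(I))$. For a linear $g' = \sum_j c'_j x_{1j} \in I$, write $J^*(g') = \{j : \nu(c'_j)\text{ is minimal}\}$. A direct analysis of when $\min_j(\nu(c'_j) + \Val_S(j))$ is attained twice, using that $\Val_S(j)$ equals $1$ at $k_1, k_2$ and $0$ elsewhere, yields $S \in \Sol(\trop(g'))$ iff $|J^*(g') \setminus \{k_1, k_2\}| \ne 1$. The containment $\mathbb{Z}_{\ge 0} \setminus \{j_0\} \in \Sol(\trop(I)) \subseteq \Sol(\trop(g'))$ applied to $j_0 \in J^*(g')$ rules out the case $|J^*(g')| = 2$, forcing $|J^*(g')| \ge 3$. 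Consequently $S$ fails $\trop(g')$ exactly when $|J^*(g')| = 3$ and $\{k_1, k_2\} \subseteq J^*(g')$.

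The heart of the argument is the lemma that for every linear $g \in I$, the set $\mathcal D(g) := \{k_2 - k_1 : \{k_1, k_2\} \subseteq J^*(d^k g),\ |J^*(d^k g)| = 3,\ k \ge 0\}$ is finite. Writing $g = \sum_K Q_K(t)\,d^K f$ and expanding by Leibniz, the initial form at $t = 0$ of $d^k g$ is
\[
\overline{d^k g} = \sum_{i \ge 0} \binom{k}{i}\, d^{k-i} g^{(i)}, \qquad g^{(i)} := \sum_K Q_K^{(i)}(0)\, d^K f,
\]
and $J^*(d^k g)$ is its support. If every $Q_K$ is a polynomial of degree $\le D$, then $g^{(i)} = 0$ for $i > D$, and the support of $\overline{d^k g}$ lies in a window of width $D + \mathrm{ord}(g)$, so $\mathcal D(g) \subseteq \{1, \ldots, D + \mathrm{ord}(g)\}$. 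If instead some $Q_K$ is a genuine (non-polynomial) power series, then the coefficient of $x_{1,k+L}$ in $\overline{d^k g}$, viewed as a polynomial in~$k$, is shown to be non-identically zero for infinitely many integers $L$; this forces $|J^*(d^k g)| \ge 4$ for all sufficiently large $k$, so only finitely many $k$ contribute to $\mathcal D(g)$.

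With the lemma, $\mathcal D_G := \bigcup_{g \in G} \mathcal D(g)$ is finite. Any $k_1 < k_2$ with $k_2 - k_1 \notin \mathcal D_G$ and $k_2 - k_1 \ge 2$ gives the desired $S$, contradicting that $G$ is a tropical differential basis. The main obstacle is the non-polynomial case of the lemma: namely, checking carefully that the polynomials in~$k$ arising as the $(k + L)$-th coefficient of $\overline{d^k g}$ cannot all be trivial when some $Q_K$ has an infinite Taylor series, which reduces to showing that a certain system of $K$-linear identities in the Taylor coefficients of the $Q_K$'s has only the trivial solution.
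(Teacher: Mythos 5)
Your proposal is correct in outline and reaches the same conclusion via a genuinely different route; the overlap and differences are worth spelling out. Both arguments test the set $S=\mathbb{Z}_{\ge 0}\setminus\{k_1,k_2\}$, both establish that $|\suppmin(g')|\ge 3$ for every nonzero linear $g'\in I$, and both observe that $S$ fails $\trop(g')$ precisely when $\suppmin(g')=\{k_1,k_2,j\}$ for some third index~$j$. You prove the lower bound $|\suppmin(g')|\ge 3$ by first computing $\Sol(\trop(I))$ from the explicit exponential solutions and Theorem~\ref{pp} and then testing against $\mathbb{Z}_{\ge0}\setminus\{j_0\}$, whereas the paper computes directly with the band matrix $A$ and the uniform matroid $U_{2,r+1}$; both are fine, though the paper's matrix argument does not require Theorem~\ref{pp}. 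The larger divergence is in how the contradiction is reached: the paper shows $|G_r|=O(r)$ but needs $\Omega(r^2)$ many witnesses, while you instead define $\mathcal{D}(g)$, show it is finite, and pick a pair $(k_1,k_2)$ whose difference avoids the finite union $\mathcal D_G$. Your version is arguably cleaner, producing a single unwitnessed $S$ rather than a counting contradiction.

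The crux in both proofs is the same structural lemma. The paper's Lemma~\ref{lem:suppmin} is phrased as: if $|\suppmin(d^k g)|\le N$ infinitely often then $\suppmin(d^kg)=\{k+l:l\in L\}$ eventually for a fixed finite $L$; finiteness of $\mathcal D(g)$ follows immediately (apply it with $N=3$; if the hypothesis fails, only finitely many $k$ have $|\suppmin(d^kg)|=3$ anyway). Your sketch proves this by writing $g=\sum_K Q_K\, d^K f$ (a decomposition internal to~$I$) and arguing that the constant term of the coefficient of $x_{1(k+l)}$ in $d^kg$ is a polynomial in~$k$; the paper instead uses the universal decomposition $g=\sum_{i,j} c_{ij}\,d^j(t^i x_{10})$ of arbitrary linear differential polynomials, which makes the polynomial-in-$k$ computation a one-liner and cleanly reduces the question to finiteness of $L=\{l:c_{j-l,j}\ne 0\text{ for some }j\}$. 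Your ``polynomial vs.\ non-polynomial $Q_K$'' dichotomy does work: because the $\frac{k!}{(k-m)!}$ have distinct degrees, vanishing of the polynomial at shift $l=-m$ forces a triangular system which in particular kills the extreme coefficients $q_{K_*,\,\cdot}$ and $q_{0,\,\cdot}$, and iterating shows all $Q_K$ must be polynomials — so a non-polynomial $Q_K$ does force $|\suppmin(d^kg)|$ to be unbounded. But you leave this as ``the main obstacle,'' and it is precisely here that the paper's choice of decomposition pays off: Lemma~\ref{lem:suppmin} dispatches it with a short rational-function-in-$k$ argument, without any case split. Finally, one small slip: your stated equivalence ``$S\in\Sol(\trop(g'))$ iff $|J^*(g')\setminus\{k_1,k_2\}|\ne 1$'' is false for $g'$ with $|J^*(g')|\le 2$ and $J^*(g')\subseteq\{k_1,k_2\}$; it becomes correct only after you have established $|J^*(g')|\ge 3$, so the two sentences should be reordered.
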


For a nonzero linear form $f = \sum_{j=0}^\infty f_jx_{1j}$, let us write 
\[\suppmin f := \operatorname{argmin}_{j=0}^\infty \nu(f_j)\]
for the set of indices of coefficients of minimal valuation in~$f$.

\begin{lemma}\label{lem:suppmin}
Let $f\in K[[t]]\{x_1\}$ be nonzero.
Suppose that there exists $N\in\mathbb{N}$ such that $|\suppmin(d^kf)|\le N$ for infinitely many $k\in\mathbb{N}$.
Then there exists a finite set $L\subset\mathbb{Z}$ such that 
\[\suppmin(d^kf) = \{k+l : l\in L\}\]
for all but finitely many $k\in\mathbb{N}$.
\end{lemma}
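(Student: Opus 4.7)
The plan is to expand $d^k f$ via Leibniz and track the valuation of each coefficient as a function of $k$.

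Writing $f = \sum_{j=0}^{r} f_j\, x_{1j}$ with $f_r \neq 0$, the Leibniz rule gives
\[
d^k f \;=\; \sum_{j=0}^{r}\sum_{i=0}^{k}\binom{k}{i}\, f_j^{(i)}\, x_{1(j+k-i)}.
\]
Re-indexing by the shift $l = m - k$, the coefficient of $x_{1(k+l)}$ in $d^k f$ is
\[
c_l^{(k)} \;=\; \sum_{\beta = \max(0,-l)}^{r-l} \binom{k}{\beta}\, f_{l+\beta}^{(\beta)}.
\]
For each $l \in \mathbb{Z}$, set $V_l := \min_{\beta} \nu\bigl(f_{l+\beta}^{(\beta)}\bigr)$, with $V_l = \infty$ if the $\beta$-range is empty (e.g.\ $l > r$).

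The key step is to show that $\nu(c_l^{(k)}) = V_l$ for all but finitely many $k$. The inequality $\nu(c_l^{(k)}) \ge V_l$ is automatic. For equality, I would extract the $t^{V_l}$-coefficient of $c_l^{(k)}$, obtaining $\sum_{\beta} \binom{k}{\beta}\, a_\beta$, where the $a_\beta \in K^{\times}$ are the leading coefficients of those $f_{l+\beta}^{(\beta)}$ that attain $V_l$. Since the binomials $\binom{k}{\beta}$ are linearly independent as polynomials in $k$, this sum is a nonzero polynomial in $k$, hence nonzero for $k$ outside some finite set $F_l$. Ruling out this cancellation at the leading valuation is the main technical content of the argument.

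With this in hand, set $V^{*} := \min_l V_l$ and $L^{*} := \{l : V_l = V^{*}\}$. Pick any $l_0 \in L^{*}$: for $k \notin F_{l_0}$, $\nu(c_{l_0}^{(k)}) = V^{*}$, so the minimum valuation of the coefficients of $d^k f$ equals $V^{*}$ (using $\nu(c_l^{(k)}) \ge V_l \ge V^{*}$ on the other side). Writing $L_k := \suppmin(d^k f) - k$, for any $l \notin L^{*}$ we have $\nu(c_l^{(k)}) \ge V_l > V^{*}$, so $l \notin L_k$; while for $l \in L^{*}$ and $k \notin F_l$, we have $l \in L_k$.

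Finally, the hypothesis enters: if $L^{*}$ were infinite, then picking any $M$ elements of it would force $|L_k| \ge M$ for all sufficiently large $k$, contradicting $|L_k| \le N$ for infinitely many $k$. Hence $L^{*}$ is finite, and combining the two containments above yields $L_k = L^{*}$ for all $k$ outside the finite set $\bigcup_{l \in L^{*}} F_l$. Setting $L := L^{*}$ completes the proof.
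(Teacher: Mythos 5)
Your proof is correct, and it reaches the same key insight as the paper's by a genuinely different route. The paper re-expands $f$ in the basis $\{d^j(t^ix_{10}) : i\ge0,\ 0\le j\le j_*\}$, which makes applying $d^k$ a trivial index shift; the constant term of the coefficient of $x_{1(k+l)}$ is then $\sum_j \frac{(j+k)!}{(k+l)!}c_{j-l,j}$, a polynomial in $k$ (the paper says ``rational function''), so it is nonzero for all but finitely many $k$ unless identically zero, and the set $L$ of shifts $l$ where it is not identically zero is what one needs. A side effect of the paper's basis is that the minimal valuation is automatically normalized to $0$. You instead stay in the standard basis $\{x_{1j}\}$, apply Leibniz directly, and track the per-shift minimum valuation $V_l$ explicitly; the same role is played by the observation that the $t^{V_l}$-coefficient of $c_l^{(k)}$ is $\sum_\beta \binom{k}{\beta}a_\beta$, a nonzero polynomial in $k$ because the $\binom{k}{\beta}$ are linearly independent as polynomials. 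Both approaches ultimately rest on ``a nonzero polynomial in $k$ has finitely many roots.'' Yours avoids having to justify the change of basis and its convergence, at the cost of a little more bookkeeping with the $V_l$, $V^*$, $L^*$; it is also slightly more careful in treating negative shifts $l$ on an equal footing (the paper writes $l\in\mathbb{N}$ where $l\in\mathbb{Z}$ is what the argument really requires, which is presumably a typo given that the paper does invoke the hypothesis to conclude $L$ is finite). The paper's approach is arguably slicker since $d^k$ acts so simply in its chosen basis. Either is a complete proof.
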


\begin{proof}
The summand of
\[d^j(t^ix_{10}) = \sum_{k=0}^j\frac{i!\,j!}{k!\,(j-k)!\,(i-j+k)!}t^{i-j+k}x_{1k}\]
of largest differential order is a nonzero scalar times $t^ix_{1j}$, and these summands are linearly dependent as $(i,j)$ vary, 
so $f$ can be written uniquely as
\[f = \sum_{i=0}^\infty\sum_{j=0}^{j_*}c_{i,j}d^j(t^ix_{10})\]
for scalars $c_{i,j}\in K$.
There are no issues of convergence in~$K$,
since each scalar coefficient of each power series in~$f$ is a linear combination of finitely many $c_{i,j}$.
Then in
\[d^kf = \sum_{i=0}^\infty\sum_{j=0}^{j_*}c_{ij}d^{j+k}(t^ix_{10}),\]
the $K[[t]]$-valued coefficient of $x_{1(k+l)}$ for any $l\in\mathbb{N}$ has constant term
\[\sum_{j=0}^{j_*}\frac{(j+k)!}{(k+l)!}c_{j-l,j}.\]
Regarding $l$ as fixed, this sum lies in the field of rational functions $K(k)$,
so if it is nonzero for any~$k$ then it is zero for a finite number of~$k$.
Therefore, the $K[[t]]$-valued coefficient of $x_{1(k+l)}$ in $d^kf$ has valuation zero
for all $l$ in
\[L = \{l\in\mathbb{N} : c_{j-l,j}\ne0\mbox{ for some } j\in\mathbb{N}\},\]
for all but finitely many $k\in\mathbb{N}$.  
This implies the lemma for the above choice of~$L$,
which is a finite set by the assumption that $\suppmin(d^kf)\le N$ infinitely often.
\end{proof}

\begin{proof}[Proof of Proposition~\ref{prop:no finite basis}]
Suppose towards a contradiction that $G$ is a finite tropical differential basis of linear forms for~$I$.
For convenience, write $d^*G := \{d^kg : k\in\mathbb{N}, g\in G\}$.

We show that if $f\in I$ is nonzero, then $|\suppmin  f|\ge3$.
Let $l=\min\{\nu(f_j):j\in\mathbb{N}\}$
and choose $r$ to be the differential order of~$f$.
Then $f$ can be written $vA\,(x_{10},x_{11},\ldots,x_{1r})^{\rm T}$ for some row vector $v\in K[[t]]^{r-1}$.
If $|\suppmin f|\le2$, then $\suppmin f$ is disjoint from a set $S\subseteq\{0,\ldots,r\}$ of size $r-1$,
so that $f_j\in t^{l+1}K[[t]]$ for all $j\in S$.
Since the submatrix of~$A$ on row set~$S$ is an invertible matrix over~$K$,
this implies that all entries of $v$ lie in $t^{l+1}K[[t]]$.
But then all coefficients of $f$ lie in $t^{l+1}K[[t]]$ as well, contradicting the choice of~$l$.

Consider the sets
\[
G_r = \big\{f\in d^*G : |\suppmin (f)| = 3,
\suppmin (f)\cap\{0,\ldots,r\}\ne\emptyset\big\}.
\]
For a fixed $g\in G$,
if $|\suppmin(d^kg)|=3$ infinitely often,
then by Lemma~\ref{lem:suppmin} there is a set $L$, in this case necessarily of size~3,
such that $\suppmin(d^kg) = \{k+l:l\in L\}$ for all but finitely many $k$. 
So, aside from a finite number of exceptions,
the derivatives $d^kg$ cease to lie in~$G_r$ once $k+\min L>r$.
If $|\suppmin(d^kg)|=3$ only finitely often then of course $d^kg\in G_r$ only finitely often.
It follows that $|G_r| = O(r)$.

For each pair of nonnegative integers $a,b$ with $b-a\ge2$, 
let  $q_{ab}\in\mathbb{T}^{\mathbb N}$  be the point all of whose coordinates are 0
except that its  $a$\/th and  $b$\/th coordinates are~1.
Observe that $q_{ab}$ is in the image of $\Val$, 
and the truncation $((q_{ab})_j : j=0,\ldots,r)$ lies outside $V_{\trop}(I_{r})$ for any $r\ge b$.
So $G$ must provide a witness that $q_{ab}$ is not a solution of~$I$,
namely a polynomial $f=\sum f_jx_{1j}\in d^*G$ such that $q_{ab}\not\in V_{\trop}(f)$.
Since $|\suppmin(f)|\ge3$, there must be some $j\in\suppmin(f)\setminus\{a,b\}$. 
Then $\nu(f_j)+(q_{ab})_j=\nu(f_j)+0$ is a term of $\trop(f)(q_{ab})$ attaining the minimum.
In order for $\trop(f)(q_{ab})$ not to vanish, there must be only one such $j$,
that is, $\suppmin(f) = \{a,b,j\}$.  
Therefore, 
$\{\suppmin(f) : f\in G_r\}$ contains at least one superset
of each of the $\binom{r+1}2-r$ sets $\{a,b\}\subset\{0, \ldots,r\}$ with $b-a\ge2$.  
Since each 3-element set is a superset of only three 2-element sets, it follows that
\[|G_r|\ge|\{\suppmin (f) : f\in G_r\}|\ge\frac{\binom{r+1}2-r}3.\]
This is a contradiction.
\end{proof}

\appendix

\section{A differential ideal with tropical solutions but no solutions over a field}\label{sec:denef}\label{sec:valuation norm}
In this appendix we repeat an example due to Denef and Lipshitz \cite[Remark 2.12]{DenefLipshitz}.
The example is a differential ideal with a tropical solution and with no solutions over a field $K$ of characteristic zero,
and such that the nonexistence of the latter solutions cannot be detected by truncation to any finite order.
The field $K$ is not algebraically closed,
so that \cite[Theorem 8.1]{ArocaGarayToghani:2016} fails to apply,
but the example is of interest in that the obstruction to lifting the tropical solution
is not the simple unavailability of a coefficient in~$K$ satisfying a certain polynomial equation.

\begin{example}
We begin by considering the differential ideal 
\begin{equation}
    I=\langle tx_{11}-(x_{20}+t)x_{10}-1, x_{21}\rangle_d\subseteq K[[t]]\{x_1,x_2\}.
\end{equation}
The differential polynomial $x_{21}$ vanishes at $x=\varphi\in K[[t]]^2$ only when $\varphi_2$ is a scalar.
In this case solving for the coefficients of $\varphi_1$ recursively shows that its only possible value is
\begin{equation}\label{eq:x(alpha)}
\varphi_1 = \sum_{j=0}^\infty\frac{t^j}{\prod_{k=0}^j(k-\varphi_2)}.
\end{equation}
This $\varphi_1$ is a well-defined element of~$K[[t]]$ unless one of its coefficients features a division by zero.
That is, $I$ has a solution if and only if the scalar $\varphi_2$ lies in $K\setminus \mathbb{N}$
under the canonical inclusion $\mathbb{N}\subseteq K$.

We now specify $K=k(s)$, where $k$ is a formally real field of characteristic zero,
that is, where $-1$ is not a sum of squares in $k$.
Then $\mathbb{N}$ is a polynomially definable set over~$K$
\cite{denef1978diophantine}. 
Let $J\subseteq K[x_2,\ldots,x_n]$ be an ideal such that the projection of $V(J)\subset K^{r-1}$ onto the $x_2$ coordinate is $\mathbb N$.  Then the solution set of the differential ideal
\[J'=\langle J\rangle_d+\langle x_{21},\ldots,x_{n1}\rangle_d\]
is $V(J)\subset K[[t]]^{n-1}$,
the equations $x_{i1}$ constraining each coordinate to be a scalar.

It follows that $\Sol(I+J')=\Sol(I)\cap\Sol(J')=\emptyset$,
because no value of $\varphi_2$ is consistent both with $I$ and with $J'$.

However, for any $r\in\mathbb{Z}_{\geq 0}$,  
any truncation $(I+J')_r$ has solutions $\varphi\in K[[t]]^n$,
one for each natural value of $\varphi_2$ strictly greater than $r$,
since then all of the coefficients in \eqref{eq:x(alpha)} are well-defined elements of~$K$.
The usual Fundamental Theorem of Tropical Geometry implies that 
$V_{\trop}(I_r)$ contains $(\nu(d^j\varphi_i) : 1\leq i\leq n, 0\leq j\leq r)$.
The series \eqref{eq:x(alpha)} has no coefficients equal to zero, and $\varphi_i$ must be a scalar for $i\ge2$,
implying that $\nu(d^j\varphi_1)=0$ for all $j\ge0$
and $\nu(d^j\varphi_i)=\infty$ for all $i=2,\ldots,n$ and $j\ge1$,
while $\nu(d^0\varphi_i)\in\{0,\infty\}$ for $i=2,\ldots,n$.
There are finitely many choices for these last valuations.  Therefore there exists an $n$-tuple of sets $S=(\mathbb Z_{\geq 0},S_2,\ldots,S_n)$,
where $S_i\subseteq\{0\}$ for $i=2,\ldots,n$,
so that 
\[(\Val_{S_i}(j) : 1\leq i\leq n, 0\leq j\leq r) = (\nu(d^j\varphi_i) : 1\leq i\leq n, 0\leq j\leq r)\] 
for infinitely many $r$.
This implies that $S\in\Sol(\trop(I+J'))$.
\end{example}

\bibliographystyle{amsalpha}
\addcontentsline{toc}{chapter}{Bibliografia}
\bibliography{bibliografia_zeinab1}

\end{document}